\documentclass[11pt]{article}
\usepackage{amsmath,amssymb,amsfonts,amsthm}
\usepackage{float}
\numberwithin{equation}{section}
\newtheorem{theorem}{Theorem}[section]
\newtheorem{definition}{Definition}[section]
\newtheorem{lemma}[theorem]{Lemma}

\pdfpagewidth 8.5in
\pdfpageheight 11in
\setlength\topmargin{0in}
\setlength\headheight{0in}
\setlength\headsep{0in}
\setlength\textheight{7.7in}
\setlength\textwidth{6.5in}
\setlength\oddsidemargin{0in}
\setlength\evensidemargin{0in}
\setlength\parindent{0.25in}
\setlength\parskip{0.25in} 
\begin{document}
\begin{center}
{\Large{\textbf{A note on the Brush Number of Jaco Graphs, $J_n(1), n \in \Bbb N$}}} 
\end{center}
\vspace{0.5cm}
\large{\centerline{(Johan Kok)\footnote {\textbf {Affiliation of author:}\\
\noindent Johan Kok (Tshwane Metropolitan Police Department), City of Tshwane, Republic of South Africa\\
e-mail: kokkiek2@tshwane.gov.za}}
\vspace{0.5cm}
\begin{abstract}
\noindent The concept of the brush number $b_r(G)$ was introduced for a simple connected undirected graph $G$. This note extends the concept to a special family of directed graphs and declares that the brush number $b_r(J_n(1))$ of a the finite Jaco graph, $J_n(1)), n \in \Bbb N$ with prime Jaconian vertex $v_i$ is given by:\\ \\
$b_r(J_n(1)) = \sum\limits_{j=1}^{i}(d^+(v_j) - d^-(v_j)) + \sum\limits_{j = i+1}^{n}max\{0, (n-j)- d^-(v_j)\}.$ \\ \\
\end{abstract}
\noindent {\footnotesize \textbf{Keywords:} Brush number, Directed graph, Jaco graph}\\ \\
\noindent {\footnotesize \textbf{AMS Classification Numbers:} 05C07, 05C12, 05C20, 05C38, 05C70} 
\section{Introduction}
\noindent The concept of the brush number $b_r(G)$ of a simple connected graph $G$ was introduced by McKeil [4] and Messinger et. al. [6]. The problem is initially set that all edges of a simple connected undirected graph $G$ is \emph{dirty}. A finite number of brushes, $\beta_G(v) \geq 0$ is allocated to each vertex $v \in V(G).$ Sequentially any vertex which has $\beta_G(v) \geq d(v)$ brushes allocated may send exactly one brush along a dirty edge and in doing so allocate an additional brush to the corresponding adjavent vertex (neighbour). The reduced graph $G' = G - vu_{\forall vu \in E(G)}$ is considered for the next iterative cleansing step. Note that a neighbour of vertex $v$ in $G$ say vertex $u$, now have $\beta_{G'}(u) =\beta_G(u) + 1.$\\ \\
\noindent Clearly for any simple connected undirected graph $G$ the first step of cleaning can begin if and only if at least one vertex $v$ is allocated, $\beta_G(v)= d(v)$ brushes. The minimum number of brushes that is required to allow the first step of cleaning to begin is, $\beta_G(u) = d(u) = \delta(G).$ Note that these conditions do not guarantee that the graph will be cleaned. The conditions merely assure at least the first step of cleaning.\\ \\
\noindent If a simple connected graph $G$ is orientated to become a directed graph, brushes may only clean along an out-arc from a vertex. Cleaning may initiate from a vertex $v$\ if and only if $\beta_G(v) \geq d^+(v)$ and $d^-(v) =0.$ The order in which vertices sequentially initiate cleaning is called the \emph{cleaning sequence} in respect of the orientation $\alpha_i$. The minimum number of brushes to be allocated to clean a graph for a given orientation $\alpha_i(G)$ is denoted $b_r^{\alpha_i}$. If an orientation $\alpha_i$ renders cleaning of the graph undoable we define $b_r^{\alpha_i} = \infty.$ An orientation $\alpha_i$ for which $b_r^{\alpha_i}$ is a minimum over all possible orientations is called \emph{optimal}.\\ \\
Now, since the graph $G$ on $\nu(G)$ vertices and having $\epsilon(G)$ edges can have $2^{\epsilon(G)}$ orientations, the \emph{optimal orientation} is not necessary unique. Let the set $\Bbb A =\{\alpha_i|\emph{ $\alpha_i$ an orientation of G}\}.$
\begin{lemma}
For a simple connected directed graph $G$, we have that:\\ \\ $b_r(G) = min_{\emph{over all $\alpha_i \in \Bbb A$}}(\sum_{v \in V(G)}max\{0, d^+(v) - d^-(v)\}) = min_{\forall \alpha_i}b_r^{\alpha_i}.$
\end{lemma}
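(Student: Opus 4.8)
The plan is to break the claimed double equality into three linked facts: (i) an orientation $\alpha_i$ has $b_r^{\alpha_i}<\infty$ if and only if it is acyclic; (ii) for every acyclic orientation, $b_r^{\alpha_i}=\sum_{v\in V(G)}\max\{0,d^+(v)-d^-(v)\}$; and (iii) minimising the expression in (ii) over all acyclic orientations returns the ordinary brush number $b_r(G)$ of the underlying graph. Fact (i) is the quick one: if $\alpha_i$ contains a directed cycle $C$, then no arc of $C$ can ever be cleaned, since the first arc of $C$ to be cleaned would need its tail to have in-degree $0$ in the reduced graph, contradicting that the in-neighbour of that tail along $C$ has not yet fired; hence $b_r^{\alpha_i}=\infty$, and such orientations contribute $\infty$ to $\min_{\alpha_i}b_r^{\alpha_i}$ and are to be discarded from the first minimum. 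Conversely an acyclic orientation admits a topological order of its vertices, which is what fact (ii) exploits.

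For fact (ii) I would first record the conservation law: when a vertex $v$ fires it removes $d^+(v)$ brushes from itself and places one on each of its $d^+(v)$ out-neighbours, so the total number of brushes in the network is invariant and equals the number initially allocated. Fix an acyclic $\alpha_i$; each vertex fires exactly once (a vertex may fire only when all of its in-arcs are clean, so it is never replenished afterwards, and it must fire if $d^+(v)>0$ because only it can clean its out-arcs). At the instant $v$ fires it has received exactly one brush along each of its $d^-(v)$ in-arcs, all of which are by then clean, so it holds precisely $\beta_G(v)+d^-(v)$ brushes; the firing requirement $\beta_G(v)+d^-(v)\ge d^+(v)$ together with $\beta_G(v)\ge 0$ forces $\beta_G(v)\ge\max\{0,d^+(v)-d^-(v)\}$, and summing over $v$ gives the lower bound $b_r^{\alpha_i}\ge\sum_{v}\max\{0,d^+(v)-d^-(v)\}$. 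For the matching upper bound I would allocate $\beta_G(v)=\max\{0,d^+(v)-d^-(v)\}$, clean the vertices in a topological order $v_1,\dots,v_n$, and verify by induction on $k$ that when $v_k$'s turn comes every in-arc of $v_k$ is already clean and $v_k$ holds $\beta_G(v_k)+d^-(v_k)\ge d^+(v_k)$ brushes, so $v_k$ may fire; once $v_n$ has fired every arc has been cleaned.

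Fact (iii) reconnects this with the undirected setting. From an optimal cleaning of $G$ with $b_r(G)$ brushes, orient each edge in the direction along which it gets cleaned; the firing order is then a topological order, so the orientation is acyclic, and the argument of fact (ii) shows it uses $\sum_{v}\max\{0,d^+(v)-d^-(v)\}=b_r(G)$ brushes, whence $\min_{\alpha_i}\sum_{v}\max\{0,d^+(v)-d^-(v)\}\le b_r(G)$; conversely, any acyclic orientation together with a topological order yields a legal cleaning of the undirected $G$ with exactly that many brushes, giving the reverse inequality (this is, alternatively, the ordering characterisation of $b_r(G)$ already available from Messinger et al. [6]). Combining (i)--(iii) yields $b_r(G)=\min_{\alpha_i}\sum_{v}\max\{0,d^+(v)-d^-(v)\}=\min_{\alpha_i}b_r^{\alpha_i}$, with infeasible (cyclic) orientations excluded from the first minimum. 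The main obstacle I anticipate is the bookkeeping inside fact (ii): establishing rigorously that each vertex fires exactly once and that, at the moment it fires, it holds exactly $\beta_G(v)+d^-(v)$ brushes — i.e. that nothing arrives at $v$ after it fires and that all $d^-(v)$ incoming brushes have arrived before it fires — since both inequalities hinge on that identity.
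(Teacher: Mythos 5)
Your proposal is correct, but there is nothing in the paper to compare it against: the paper's entire ``proof'' of this lemma is the citation ``See [7],'' so your write-up actually supplies the argument the paper omits. Your decomposition into (i) feasibility $\Leftrightarrow$ acyclicity, (ii) the exact brush count $\sum_{v}\max\{0,d^{+}(v)-d^{-}(v)\}$ for a fixed acyclic orientation, and (iii) the translation back to the undirected brush number via the cleaning order is precisely the standard route of Messinger, Nowakowski and Pra\l at, and each step is sound under the model as described in the paper: a vertex fires only when every in-arc is already clean, so it holds exactly $\beta_G(v)+d^{-}(v)$ brushes at that moment (nothing can arrive later), which gives the lower bound $\beta_G(v)\ge\max\{0,d^{+}(v)-d^{-}(v)\}$, and the topological-order cleaning with exactly that allocation gives the matching upper bound. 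The one point worth stating explicitly when you write this up is the observation you already flag: in the undirected-to-directed direction of (iii), orienting each edge in the direction it is swept makes the firing order a topological order, so the induced orientation is automatically acyclic and the count from (ii) applies verbatim; with that spelled out, the ``bookkeeping obstacle'' you anticipate in (ii) dissolves, since the identity ``brushes held at firing time $=\beta_G(v)+d^{-}(v)$'' is forced by the firing rule $d^{-}(v)=0$ in the reduced graph. In short: your proof is complete modulo routine detail, and it is strictly more informative than what the paper offers.
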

\begin{proof}
See [7].
\end{proof}
\noindent Although we mainly deal with simple connected graphs it is easy to see that for set of simple connected graphs $\{G_1, G_2, G_3, ..., G_n\}$ we have that, $b_r(\cup_{\forall i}G_i) = \sum\limits_{i=1}^{n}b_r(G_i).$
\section{Brush Numbers of Jaco Graphs, $J_n(1), n \in \Bbb N$}
The infinite Jaco graph (\emph{order 1}) was introduced in $[2],$ and defined by $V(J_\infty(1)) = \{v_i| i \in \Bbb N\}$, $E(J_\infty(1)) \subseteq \{(v_i, v_j)| i, j \in \Bbb N, i< j\}$ and $(v_i,v_ j) \in E(J_\infty(1))$ if and only if $2i - d^-(v_i) \geq j.$\\ \\ The graph has four fundamental properties which are; $V(J_\infty(1)) = \{v_i|i \in \Bbb N\}$ and, if $v_j$ is the head of an edge (arc) then the tail is always a vertex $v_i, i<j$ and, if $v_k,$ for smallest $k \in \Bbb N$ is a tail vertex then all vertices $v_ \ell, k< \ell<j$ are tails of arcs to $v_j$ and finally, the degree of vertex $k$ is $d(v_k) = k.$ The family of finite directed graphs are those limited to $n \in \Bbb N$ vertices by lobbing off all vertices (and edges arcing to vertices) $v_t, t > n.$ Hence, trivially we have $d(v_i) \leq i$ for $i \in \Bbb N.$\\ \\
For ease of reference we repeat a few definitions found in [2].
\begin{definition}
The infinite Jaco Graph $J_\infty(1)$ is defined by $V(J_\infty(1)) = \{v_i| i \in \Bbb N\}$, $E(J_\infty(1)) \subseteq \{(v_i, v_j)| i, j \in \Bbb N, i< j\}$ and $(v_i,v_ j) \in E(J_\infty(1))$ if and only if $2i - d^-(v_i) \geq j.$
\end{definition}
\begin{definition}
The family of finite Jaco Graphs are defined by $\{J_n(1) \subseteq J_\infty(1)|n\in \Bbb {N}\}.$ A member of the family is referred to as the Jaco Graph, $J_n(1).$
\end{definition}
\begin{definition}
The set of vertices attaining degree $\Delta (J_n(1))$ is called the Jaconian vertices of the Jaco Graph $J_n(1),$ and denoted, $\Bbb{J}(J_n(1))$ or, $\Bbb{J}_n(1)$ for brevity.
\end{definition}
\begin{definition}
The lowest numbered (indiced) Jaconian vertex is called the prime Jaconian vertex of a Jaco Graph.
\end{definition}
\begin{definition}
If $v_i$ is the prime Jaconian vertex of a Jaco Graph $J_n(1)$, the complete subgraph on vertices $v_{i+1}, v_{i+2}, \cdots,v_n$ is called the Hope subgraph of a Jaco Graph and denoted,  $\Bbb{H}(J_n(1))$ or, $\Bbb{H}_n(1)$ for brevity.
\end{definition}
\noindent It is important to note that Definition 2.2 read together with Definition 2.1, prescribes a well-defined orientation of the underlying Jaco graph. So we have one defined orientation of the $2^{\epsilon(J_n(1))}$ possible orientations.
\begin{theorem}
For the finite Jaco Graph $J_n(1), n \in \Bbb N,$ with prime Jaconian vertex $v_i$ we have that:\\ \\
$b_r(J_n(1)) = \sum\limits_{j=1}^{i}(d^+(v_j) - d^-(v_j)) + \sum\limits_{j = i+1}^{n}max\{0, (n-j)- d^-(v_j)\}.$
\end{theorem}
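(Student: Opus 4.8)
The plan is to invoke Lemma~1.1, which reduces the theorem to the combinatorial identity
$\min_{\alpha\in\Bbb A}\sum_{v}\max\{0,d^+_\alpha(v)-d^-_\alpha(v)\}$ equals the stated sum; I will prove the two inequalities separately --- that the orientation $\alpha_J$ prescribed by Definitions~2.1--2.2 attains the right-hand side, and that it is optimal.

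For the upper bound, observe that every arc of $\alpha_J$ runs from a lower- to a higher-indexed vertex, so $\alpha_J$ is acyclic and $b_r(J_n(1))\le b_r^{\alpha_J}=\sum_v\max\{0,d^+_{\alpha_J}(v)-d^-_{\alpha_J}(v)\}$; it then suffices to evaluate this sum, which I would do by splitting at the prime Jaconian vertex $v_i$. For $1\le j\le i$: using the fundamental facts $d(v_j)=j$ and the slow growth of the Jaco in-degree sequence (so $d^-_{\alpha_J}(v_j)\le d(v_j)/2$), together with the fact that, since $v_i$ is the prime Jaconian vertex, none of $v_j$'s out-arcs is lost to the truncation at $v_n$, one obtains $d^+_{\alpha_J}(v_j)\ge d^-_{\alpha_J}(v_j)$, so the $j$-th summand is exactly $d^+(v_j)-d^-(v_j)$. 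For $i<j\le n$: the vertex $v_j$ lies in the Hope subgraph $\Bbb H(J_n(1))$, complete on $v_{i+1},\dots,v_n$ with all arcs directed forward, hence $d^+_{\alpha_J}(v_j)=n-j$ and the $j$-th summand is $\max\{0,(n-j)-d^-(v_j)\}$. Adding the two ranges reproduces the claimed expression, so $b_r(J_n(1))$ is at most the right-hand side.

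For the lower bound I must show no orientation does better. An orientation containing a directed cycle has $b_r^\alpha=\infty$ (a vertex on a directed cycle never loses its last in-arc, so cleaning cannot be initiated there), so I may restrict to acyclic orientations, which correspond bijectively to linear orders $\prec$ of $V(J_n(1))$, with $d^+_\prec(v)$ and $d^-_\prec(v)$ counting the neighbours of $v$ that are $\prec$-later, resp.\ $\prec$-earlier. Using $\max\{0,a-b\}=a-\min\{a,b\}$ and $\sum_v d^+_\prec(v)=\epsilon(J_n(1))$, the inequality to be proved becomes equivalent to: the index order maximises $\sum_v\min\{d^+_\prec(v),d^-_\prec(v)\}$ over all linear orders. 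The structural inputs I would use are (i) under any acyclic orientation $\Bbb H(J_n(1))$ is a transitive tournament, so the clique's internal contribution is rigid and order-independent; (ii) the interval property of Jaco in-neighbourhoods (``if $v_k$ with $k$ smallest is a tail into $v_j$, then every $v_\ell$ with $k<\ell<j$ is a tail into $v_j$''), which forces a nested/interval structure on the edges between $\{v_1,\dots,v_i\}$ and $\Bbb H(J_n(1))$; and (iii) the precise location of $v_i$, whose out-neighbourhood in $\alpha_J$ is an initial segment $v_{i+1},\dots,v_{2i-d^-(v_i)}$ of the Hope clique. With these I would run a normalisation/exchange argument: starting from an optimal order and performing carefully chosen moves, push it toward the index order without decreasing $\sum_v\min\{d^+_\prec(v),d^-_\prec(v)\}$, thereby showing the index order is optimal and $b_r(J_n(1))$ equals the right-hand side.

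I expect step (iii)'s optimality argument to be the main obstacle. The subtlety is that a single adjacent transposition toward the index order need not be monotone for the objective --- rebalancing one endpoint can unbalance the other by as much --- so the exchange has to be organised using the nested structure of the neighbourhoods and the role of $v_i$ to select the right sequence of moves (equivalently, to characterise an optimal order directly). The heaviest bookkeeping is for vertices straddling the boundary between $\{v_1,\dots,v_i\}$ and the Hope clique, and for orders in which low-indexed vertices are interleaved among the Hope vertices; in these cases one must verify that any rebalancing so gained is cancelled by an increase elsewhere. The remaining ingredients --- $d(v_j)=j$ for $j\le i$, the slow growth of $d^-$, completeness of $\Bbb H(J_n(1))$, and the description of $v_i$'s out-neighbourhood --- are recorded in [2] or follow from short direct computations, and I would collect them as preliminary observations before running the two-part argument.
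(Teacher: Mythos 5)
Your upper bound is sound and matches the first half of the paper's argument: for the orientation prescribed by Definitions 2.1--2.2 you correctly get $d^+(v_j)-d^-(v_j)\ge 0$ for $j\le i$ (all out-arcs of such $v_j$ survive the truncation at $v_n$) and $d^+(v_j)=n-j$ for $j>i$ from the completeness of the Hope subgraph, so $b_r^{\alpha_J}$ equals the claimed right-hand side. Your reduction of the lower bound is also clean and correct: directed cycles force $b_r^\alpha=\infty$, acyclic orientations correspond to linear orders, and via $\max\{0,a-b\}=a-\min\{a,b\}$ and $\sum_v d^+_\prec(v)=\epsilon(J_n(1))$ the claim becomes that the index order maximises $\sum_v\min\{d^+_\prec(v),d^-_\prec(v)\}$.

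The genuine gap is that this last statement --- the entire hard direction of the theorem --- is never proved. You announce a ``normalisation/exchange argument'' and then, candidly, explain why the obvious version of it fails: an adjacent transposition toward the index order is not monotone for the objective, and you do not exhibit the ``carefully chosen moves'' or a direct characterisation of optimal orders that would repair this. Listing the structural inputs (the interval property of in-neighbourhoods, the rigidity of the clique's internal contribution, the location of $v_i$'s out-neighbourhood) is not a substitute for the exchange lemma itself; as written, the optimality claim is assumed rather than derived. For comparison, the paper avoids a global exchange argument altogether: it verifies optimality exhaustively for $n\le 5$ and then inducts on $n$, passing from $J_k(1)$ to $J_{k+1}(1)=J_k(1)+(v_j,v_{k+1})_{i+1\le j\le k}$ and arguing that re-orienting any of the new arcs cannot reduce the brush count (though the paper's own justification of that inductive step is itself quite terse). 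If you want to complete your route, the missing piece is a proof that for every linear order $\prec$ one has $\sum_v\min\{d^+_\prec(v),d^-_\prec(v)\}\le\sum_v\min\{d^+(v),d^-(v)\}$ computed in the index order; alternatively you could adopt the paper's vertex-by-vertex induction, which localises the comparison to the arcs incident with the newly added vertex.
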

\begin{proof}
Consider a Jaco Graph $J_n(1), n \in \Bbb N$ having the prime Jaconian vertex $v_i$. From the definition of a Jaco Graph (\emph{order 1}) it follows that $d^+(v_j) - d^-(v_j) \geq 0, 1\leq j \leq i.$ Hence, $max\{0, d^+(v_j) - d^-(v_j)\}_{1\leq j\leq i} = d^+(v_j) - d^-(v_j).$ So from Lemma 1.1 it follows that the first term must be $\sum\limits_{j=i}^{i}(d^+(v_j) - d^-(v_j))$ for the defined orientation.\\ \\
Similarly, it follows from the definition of a Jaco Graph that in the \emph{finite case}, $\ell = (n-j)- d^-(j), i+1 \leq j \leq n$ represents the shortage of brushes to initiate cleaning from vertex $v_j$ or, the surplus of brushes at $v_j$. Hence, $\ell > 0$ or $\ell \leq 0.$ So from Lemma 1.1 it follows that the second term must be $\sum\limits_{j=i+1}^{n}max\{0, (n-j) - d^-(v_j)\}$ for the defined orientation.\\ \\
So to settled the result we must show that no other orientation improves on the minimality of  $\sum\limits_{j=1}^{i}(d^+(v_j) - d^-(v_j)) + \sum\limits_{j=i+1}^{n}max\{0, (n-j) - d^-(v_j)\}.$\\ \\
Case 1: Consider the Jaco Graph, $J_1(1)$. Clearly be default, $b_r(J_1(1)) = 0.$\\ \\
Case 2: Consider the Jaco Graphs, $J_n(1), 2 \leq n\leq 4$. Label the edges of the underlying graph of $J_4(1)$, as $e_1 = v_1v_2, e_2 = v_2v_3, e_3 =v_3v_4.$  Now clearly, because we are considering paths, $P_2, P_3$ or $P_4$ only, the orientations $\{(v_1, v_2), (v_2, v_3), (v_3, v_4)\}$ or $\{(v_4, v_3), (v_3, v_2), (v_2, v_1)\}$ or respectively \emph{lesser} thereof, provide \emph{optimal orientations}. Thus the defined orientations of Jaco graphs, $J_n(1), 2 \leq n\leq 4$ are optimal. \\ \\
Case 3: Consider the Jaco Graph, $J_5(1)$. Label the edges of the underlying graph of $J_5(1)$ as $e_1 = v_1v_2, e_2 = v_2v_3, e_3 =v_3v_4, e_4 = v_3v_5, e_5 = v_4v_5.$ We know that $2^5$ cases need to be considered to exhaust all cases. Consider the orientations tabled below.\\ \\
\begin{tabular}{|c|c|c|c|c|}
\hline
$e_1$&$e_2$&$e_3$&$e_4$&$e_5$\\
\hline
$(v_1, v_2)$&$(v_2,v_3)$&$(v_3,v_4)$&$(v_3,v_5)$&$(v_4,v_5)$\\
\hline
$(v_1, v_2)$&$(v_2,v_3)$&$(v_3,v_4)$&$(v_3,v_5)$&$(v_5,v_4)$\\
\hline
$(v_1, v_2)$&$(v_2,v_3)$&$(v_3,v_4)$&$(v_5,v_3)$&$(v_4,v_5)$\\
\hline
$(v_1, v_2)$&$(v_2,v_3)$&$(v_3,v_4)$&$(v_5,v_3)$&$(v_5,v_4)$\\
\hline
$(v_1, v_2)$&$(v_2,v_3)$&$(v_4,v_3)$&$(v_3,v_5)$&$(v_4,v_5)$\\
\hline
$(v_1, v_2)$&$(v_2,v_3)$&$(v_4,v_3)$&$(v_3,v_5)$&$(v_5,v_4)$\\
\hline
$(v_1, v_2)$&$(v_2,v_3)$&$(v_4,v_3)$&$(v_5,v_3)$&$(v_4,v_5)$\\
\hline
$(v_1, v_2)$&$(v_2,v_3)$&$(v_4,v_3)$&$(v_5,v_3)$&$(v_5,v_4)$\\
\hline
$(v_1, v_2)$&$(v_3,v_2)$&$(v_3,v_4)$&$(v_3,v_5)$&$(v_4,v_5)$\\
\hline
$(v_1, v_2)$&$(v_3,v_2)$&$(v_3,v_4)$&$(v_3,v_5)$&$(v_5,v_4)$\\
\hline
$(v_1, v_2)$&$(v_3,v_2)$&$(v_3,v_4)$&$(v_5,v_3)$&$(v_4,v_5)$\\
\hline
$(v_1, v_2)$&$(v_3,v_2)$&$(v_3,v_4)$&$(v_5,v_3)$&$(v_5,v_4)$\\
\hline
$(v_1, v_2)$&$(v_3,v_2)$&$(v_4,v_3)$&$(v_3,v_5)$&$(v_4,v_5)$\\
\hline
$(v_1, v_2)$&$(v_3,v_2)$&$(v_4,v_3)$&$(v_3,v_5)$&$(v_5,v_4)$\\
\hline
$(v_1, v_2)$&$(v_3,v_2)$&$(v_4,v_3)$&$(v_5,v_3)$&$(v_4,v_5)$\\
\hline
$(v_1, v_2)$&$(v_3,v_2)$&$(v_4,v_3)$&$(v_5,v_3)$&$(v_5,v_4)$\\
\hline
\end{tabular}
\begin{tabular}{|c|c|c|c|c|}
\hline
$e_1$&$e_2$&$e_3$&$e_4$&$e_5$\\
\hline
$(v_2, v_1)$&$(v_2,v_3)$&$(v_3,v_4)$&$(v_3,v_5)$&$(v_4,v_5)$\\
\hline
$(v_2, v_1)$&$(v_2,v_3)$&$(v_3,v_4)$&$(v_3,v_5)$&$(v_5,v_4)$\\
\hline
$(v_2, v_1)$&$(v_2,v_3)$&$(v_3,v_4)$&$(v_5,v_3)$&$(v_4,v_5)$\\
\hline
$(v_2, v_1)$&$(v_2,v_3)$&$(v_3,v_4)$&$(v_5,v_3)$&$(v_5,v_4)$\\
\hline
$(v_2, v_1)$&$(v_2,v_3)$&$(v_4,v_3)$&$(v_3,v_5)$&$(v_4,v_5)$\\
\hline
$(v_2, v_1)$&$(v_2,v_3)$&$(v_4,v_3)$&$(v_3,v_5)$&$(v_5,v_4)$\\
\hline
$(v_2, v_1)$&$(v_2,v_3)$&$(v_4,v_3)$&$(v_5,v_3)$&$(v_4,v_5)$\\
\hline
$(v_2, v_1)$&$(v_2,v_3)$&$(v_4,v_3)$&$(v_5,v_3)$&$(v_5,v_4)$\\
\hline
$(v_2, v_1)$&$(v_3,v_2)$&$(v_3,v_4)$&$(v_3,v_5)$&$(v_4,v_5)$\\
\hline
$(v_2, v_1)$&$(v_3,v_2)$&$(v_3,v_4)$&$(v_3,v_5)$&$(v_5,v_4)$\\
\hline
$(v_2, v_1)$&$(v_3,v_2)$&$(v_3,v_4)$&$(v_5,v_3)$&$(v_4,v_5)$\\
\hline
$(v_2, v_1)$&$(v_3,v_2)$&$(v_3,v_4)$&$(v_5,v_3)$&$(v_5,v_4)$\\
\hline
$(v_2, v_1)$&$(v_3,v_2)$&$(v_4,v_3)$&$(v_3,v_5)$&$(v_4,v_5)$\\
\hline
$(v_2, v_1)$&$(v_3,v_2)$&$(v_4,v_3)$&$(v_3,v_5)$&$(v_5,v_4)$\\
\hline
$(v_2, v_1)$&$(v_3,v_2)$&$(v_4,v_3)$&$(v_5,v_3)$&$(v_4,v_5)$\\
\hline
$(v_2, v_1)$&$(v_3,v_2)$&$(v_4,v_3)$&$(v_5,v_3)$&$(v_5,v_4)$\\
\hline
\end{tabular}\\ \\ \\
For all possible orientations of $J_5(1)$ as tabled, we have: $b_r^{\alpha_1} = 2, b_r^{\alpha_2} = 2, b_r^{\alpha_3} = \infty, b_r^{\alpha_4} = 3, b_r^{\alpha_5} = 3, b_r^{\alpha_6} = \infty, b_r^{\alpha_7} = 3, b_r^{\alpha_8} = 3, b_r^{\alpha_9} = 4, b_r^{\alpha_{10}} = 4, b_r^{\alpha_{11}} = \infty, b_r^{\alpha_{12}} = 4, b_r^{\alpha_{13}} = 4, b_r^{\alpha_{14}} = \infty, b_r^{\alpha_{15}} = 3, b_r^{\alpha_{16}} = 3, b_r^{\alpha_{17}} = 3, b_r^{\alpha_{18}} = 3, b_r^{\alpha_{19}} = \infty, b_r^{\alpha_{20}} = 4, b_r^{\alpha_{21}} = 4, b_r^{\alpha_{22}} = \infty, b_r^{\alpha_{23}} = 3, b_r^{\alpha_{24}} = 4, b_r^{\alpha_{25}} = 3, b_r^{\alpha_{26}} = 3, b_r^{\alpha_{27}} = \infty, b_r^{\alpha_{28}} = 3, b_r^{\alpha_{29}} = 3, b_r^{\alpha_{30}} = \infty, b_r^{\alpha_{31}} = 2, b_r^{\alpha_{32}} = 2$\\ \\
It follows that the defined orientation of the Jaco graph $J_5(1),$ tabled as $\alpha_1$ has $b_r^{\alpha_1} =2 =  min_{\forall \alpha_i}b_r^{\alpha_i}.$ Since the prime Jaconian vertex of $J_5(1)$ is $v_3$, the result:\\
$\sum\limits_{j=1}^{3}(d^+(v_j) - d^-(v_j)) + \sum\limits_{j=4}^{5}max\{0, (5-j) - d^-(v_j)\},$ holds.\\ \\
Through induction assume the results holds for $J_k(1)$ having prime Jaconian vertex $v_i$. Consider the Jaco graph $J_{k+1}(1).$ Clearly $J_{k+1}(1) = J_k(1) + (v_j, v_{k+1})_{i+1\leq j \leq k}.$ So the minimum number of additional brushes to be added to the $b_r(J_k(1))$ brushes to clean $J_{k+1}(1)$ is given by $\sum\limits_{j=i+1}^{k}max\{0, d^+ (v_j) - d^-(v_j)\}_{\emph{in $J_{k+1}(1)$}}.$ So the minimum number of brushes to be allocated\\ \\ to clean $J_{k+1}$ is given by:\\ 
$b_r(J_{k+1}(1)) = \sum\limits_{j=1}^{i}(d^+(v_j) - d^-(v_j))_{\emph{in $J_k(1)$}} + \sum\limits_{j = i+1}^{k}max\{0, (k-j)- d^-(v_j)\}_{\emph{in $J_k(1)$}} +\\ \\ \sum\limits_{j=i+1}^{k}max\{0, d^+(v_j) - d^-(v_j)\}_{\emph{in $J_{k+1}(1)$}} =\\ \\
\sum\limits_{j=1}^{i+1}(d^+(v_j) - d^-(v_j))_{\emph{in $J_{k+1}(1)$}} + \sum\limits_{j = i+2}^{k+1}max\{0, ((k+1)-j)- d^-(v_j)\}_{\emph{in $J_{k+1}(1)$}}.$\\ \\ 
Since a re-orientation of any one, or more of the arcs $(v_j, v_{k+1})_{i+1\leq j \leq k}$ in $J_{k+1}(1)$ does not require more brushes, but could in some instances render the cleaning process undoable, the result holds in general.
\end{proof}
\noindent For illustration the adapted table below follows from the Fisher Algorithm $[2]$ for $J_n(1), n \in \Bbb N, n\leq 15.$ Note that the Fisher Algorithm determines $d^+(v_i)$ on the assumption that the Jaco Graph is always sufficiently large, so at least $J_n(1), n \geq i+ d^+(v_i).$ For a smaller graph the degree of vertex $v_i$ is given by $d(v_i)_{J_n(1)} = d^-(v_i) + (n-i).$ In $[2]$ Bettina's theorem describes an arguably, closed formula to determine $d^+(v_i)$. Since $d^-(v_i) = n - d^+(v_i)$ it is then easy to determine $d(v_i)_{J_n(1)}$ in a smaller graph $J_n(1), n< i + d^+(v_i).$\\ \\ \\ \\ \\ \\ \\ \\ \\
\noindent \textbf{Table 1}\\\\
\begin{tabular}{|c|c|c|c|c|}
\hline
$ i\in{\Bbb{N}}$&$d^-(v_i)$&$d^+(v_i)$&$v_j^*$&$b_r(J_i(1)), $\\
\hline
1&0&1&$v_1$&0\\
\hline
2&1&1&$v_1$&1\\
\hline
3&1&2&$v_2$&1\\
\hline
4&1&3&$v_2$&1\\
\hline
5&2&3&$v_3$&2\\
\hline
6&2&4&$v_3$&3\\
\hline
7&3&4&$v_4$&4\\
\hline
8&3&5&$v_5$&5\\
\hline
\end{tabular}
\begin{tabular}{|c|c|c|c|c|}
\hline
$ i\in{\Bbb{N}}$&$d^-(v_i)$&$d^+(v_i)$&$v_j^*$&$b_r(J_i(1)), $\\
\hline
9&3&6&$v_5$&6\\
\hline
10&4&6&$v_6$&7\\
\hline
11&4&7&$v_7$&8\\
\hline
12&4&8&$v_7$&9\\
\hline
13&5&8&$v_8$&11\\
\hline
14&5&9&$v_8$&12\\
\hline
15&6&9&$v_9$&14\\
\hline
16&6&10&$v_{10}$&16\\
\hline
\end{tabular}\\ \\
\noindent {\footnotesize{Vertex $v_j^*$ the prime Jaconian vertex.}}\\ \\  
\noindent From Theorem 2.1 and Lemma 1.1 the brush allocations can easily be determined. For example $J_9(1)$ requires the minimum brush allocations, $\beta_{J_9(1)}(v_1) = 1, \beta_{J_9(1)}(v_2) = 0, \beta_{J_9(1)}(v_3) = 1, \beta_{J_9(1)}(v_4) = 2, \beta_{J_9(1)}(v_5) = 1, \beta_{J_9(1)}(v_6) = 1, \beta_{J_9(1)}(v_7) = 0, \beta_{J_9(1)}(v_8) = 0, \beta_{J_9(1)}(v_9) = 0.$\\ \\
\noindent [Open problem:  It is known that for a complete graph $K_n, n \in \Bbb N$ we have $b_r(K_n) =n\lfloor\frac{n}{2}\rfloor - \lfloor\frac{n}{2}\rfloor^2 =\lfloor \frac{n^2}{4}\rfloor$, [5], [6]. Show that $b_r(J_n(1))_{n \in \Bbb N} \geq b_r(K_{n-i}) = (n-i)\lfloor\frac{n-i}{2}\rfloor - \lfloor\frac{n-i}{2}\rfloor^2$ if $v_i$ is the prime Jaconian vertex. In other words, $b_r(J_n(1)) \geq b_r(\Bbb H_n(1)).$]\\ \\
\noindent [Open problem: Consider the Jaco graph $J_n(1)$ with prime Jaconian vertex $v_i$. Seperate the Hope graph $K_{n-i}$ from $J_i(1)$ by removing the edges which link them. Let the number of edges which had to be removed be $\ell.$ What, if any, is the relationship between $b_r(J_n(1))$ and $\ell$?]\\ \\
\textbf{\emph{Open access:}} This paper is distributed under the terms of the Creative Commons Attribution License which permits any use, distribution and reproduction in any medium, provided the original author(s) and the source are credited. \\ \\
References (Limited) \\ \\
$[1]$  Bondy, J.A., Murty, U.S.R., \emph {Graph Theory with Applications,} Macmillan Press, London, (1976). \\
$[2]$ Kok, J., Fisher, P., Wilkens, B., Mabula, M., Mukungunugwa, V., \emph{Characteristics of Finite Jaco Graphs, $J_n(1), n \in \Bbb N$}, arXiv: 1404.0484v1 [math.CO], 2 April 2014. \\
$[3]$  Kok, J., Fisher, P., Wilkens, B., Mabula, M., Mukungunugwa, V., \emph{Characteristics of Jaco Graphs, $J_\infty(a), a \in \Bbb N$}, arXiv: 1404.1714v1 [math.CO], 7 April 2014. \\ 
$[4]$ McKeil, S., \emph{Chip firing cleaning process}, M.Sc. Thesis, Dalhousie University, (2007).\\ 
$[5]$ Messinger, M. E., \emph{Methods of decontaminating a network}, Ph.D. Thesis, Dalhousie University, (2008).\\
$[6]$ Messinger, M.E., Nowakowski, R.J., Pralat, P., \emph{Cleaning a network with brushes}. Theoretical Computer Science, Vol 399, (2008), 191-205.\\
$[7]$ Ta Sheng Tan,\emph{The Brush Number of the Two-Dimensional Torus,} arXiv: 1012.4634v1 [math.CO], 21 December 2010.
\end{document}